\DeclareMathOperator{\li}{li}
\DeclareMathOperator{\ord}{ord}
\DeclareMathOperator{\lcm}{lcm}
\newtheorem{theorem}{Theorem}[section]
\newtheorem{lem}{Lemma}[section]
\newtheorem{conj}{Conjecture}[section]
\newtheorem{dfn}{Definition}[section]
\newtheorem{exe}{Exercise}[section]
\newcommand{\N}{\mathbb{N}}
\newcommand{\Z}{\mathbb{Z}}
\newcommand{\F}{\mathbb{F}}
\title{Prescribed Primitive Roots And The Least Primes}
\date{}
\author{N. A. Carella}
\begin{document}
\thispagestyle{empty}
\date{}

\maketitle
\textbf{\textit{Abstract}:} Let $q\ne \pm1,v^2$ be a fixed integer, and let $x\geq 1$ be a large number. The least prime number $p \geq3 $ such that $q$ is a primitive root modulo $p$ is conjectured to be $p\ll (\log q)(\log \log q)^3),$ where $\gcd(p,q)=1$. This note proves the existence of small primes $p\ll(\log x)^c$, where $c>0$ is a constant, a close approximation to the conjectured upper bound. \let\thefootnote\relax\footnote{ \today \date{} \\
\textit{AMS MSC2020}: Primary 11A07, 11N13, Secondary 11N05, 11N37.\\
\textit{Keywords}: Primitive root, Least prime number, Artin primitive root conjecture.}


\section{Introduction}\label{S3232.000}
It is a routine calculations, using the Laws of Quadratic Reciprocity, to verify that the subset of integers $\mathcal{Q}_0=\{q=5a+2\text{ or } q=5a+3:a\geq0\}$ are primitive roots modulo $p=5$. Here, $p=5$ is the least such prime. However, it is probably difficult or impossible to use the same analysis to verify which nonsquare integers in the subset of integers $\mathcal{Q}_1=\{q=5a+1\text{ or } q=5a+4:a\geq1\}$ are primitive roots of small primes $p\geq3$, which are bounded by a constant $p\leq c=c(q)$.

\begin{conj}   \label{conj3232.000}  {\normalfont (Granville, \cite{MO2021})} Let $q\ne 1, n^2$ be a fixed integer. Then, there exists a prime $p\ll (\log q)(\log \log q)^3$ such that $q$ is a primitive root modulo $p$.
\end{conj}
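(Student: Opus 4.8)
The plan is to study the counting function
$$S(X) = \sum_{\substack{3 \le p \le X \\ p \nmid q}} \mathbf{1}\{\ord_p(q) = p-1\},$$
and to prove that $S(X) > 0$ already for $X = C(\log q)(\log\log q)^3$ with $C$ an absolute constant, which is exactly the assertion of the conjecture. The first step is the standard inclusion–exclusion for the primitive-root indicator: writing $r_d(q,p)$ for the indicator that $q$ is a $d$-th power residue modulo $p$ (for $d \mid p-1$), expanding $r_d = \tfrac1d\sum_{\chi^d=\chi_0}\chi(q)$ into multiplicative characters, and separating the trivial character yields
$$\mathbf{1}\{\ord_p(q)=p-1\} = \frac{\phi(p-1)}{p-1} + \sum_{\substack{d \mid p-1 \\ d>1}} \frac{\mu(d)}{d} \sum_{\substack{\chi^d=\chi_0 \\ \chi\ne\chi_0}} \chi(q),$$
where $\chi$ runs over characters modulo $p$. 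Summing over $p$ splits $S(X)$ into a main term $M(X) = \sum_{p \le X}\phi(p-1)/(p-1)$ and an error term $E(X)$ assembled from the character values $\chi(q)$ at the fixed base $q$; the whole problem is to show $|E(X)| < M(X)$.

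For the main term I would use the classical evaluation $M(X) \sim A\,\pi(X)$, with $A = \prod_\ell\bigl(1 - \tfrac{1}{\ell(\ell-1)}\bigr)$ Artin's constant, so that for the target $X$ one has $M(X) \asymp (\log q)(\log\log q)^2$, a genuinely large main term. I would first isolate the quadratic contribution $d=2$, the cheapest and most dangerous obstruction, since $q$ being a quadratic residue modulo $p$ already forbids it from being a primitive root. By quadratic reciprocity, for $p \le X \ll q$ the condition $\left(\frac{q}{p}\right)=-1$ places $p$ in prescribed residue classes modulo $4|q|$, so the $d=2$ subproblem is precisely a least-prime-quadratic-non-residue question modulo $q$. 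It is worth flagging at once that this anchors the difficulty: the target $(\log q)(\log\log q)^3$ lies below the GRH bound $(\log q)^2$ of Ankeny for the least non-residue, so matching the conjecture even for $\ell=2$ already presupposes the still-open, beyond-GRH Vinogradov-type size of the least quadratic non-residue.

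The terms with $d>2$ require cancellation in $\sum_{p\le X}\chi(q)$ over characters of order $d$. The decisive difficulty is that $X$ is only $(\log q)^{1+o(1)}$, so there are merely $\asymp (\log q)(\log\log q)^2$ primes in play; any method seeking oscillation as $p$ varies — including the Chebotarev/Kummer route under GRH — is starved of terms and cannot extract the saving. I would therefore pass to the dual, $q$-aspect formulation: instead of fixing $q$ and summing over $p$, bound the number of integers $q \le Q$ that are simultaneously non-primitive-roots modulo every prime $p \le X$, applying the large sieve in the $q$-variable to the Kummer fields $\Q(\zeta_\ell, q^{1/\ell})$. In line with the heuristic, the density of such conspiratorial $q$ is $\prod_{p\le X}\bigl(1 - \phi(p-1)/(p-1)\bigr) = e^{-(c_0+o(1))\pi(X)}$ for a positive constant $c_0$, and the sought value of $X$ is essentially the threshold at which this density drops below $1/q$.

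The step I expect to be the genuine bottleneck is controlling the \emph{smallest} conspiratorial $q$, i.e.\ showing that the exceptional set $B_X \subset [1,q]$ is empty rather than merely sparse. Density is insufficient: modulo $\prod_{p\le X}p \approx e^{X}$ there are exponentially many bad residues, and one must exclude that any of them is small. Moreover, pinning the exponent $3$ on $\log\log q$ — rather than the $1$ produced by the crudest first-moment balance $c_0\pi(X) > \log q$ — forces a precise treatment of the second-order terms dropped in the inclusion–exclusion, namely the variance of $\phi(p-1)/(p-1)$ and the higher power-residue corrections arising from primes $p$ for which $p-1$ carries many small factors $\ell$. Carrying out this variance analysis rigorously and uniformly over every fixed $q$ is precisely the obstruction that keeps the statement conjectural, and it is the point at which I would expect to need a genuinely new ingredient.
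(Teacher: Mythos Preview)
There is nothing to compare your proposal against: the statement is labeled a \emph{conjecture} in the paper, attributed to Granville, and the paper does not attempt to prove it. The paper explicitly says ``A weak version of this conjecture is considered in this note'' and proceeds to prove only Theorem~\ref{thm3232.100}, which asserts the existence of a prime $p \ll (\log x)^c$ for a large auxiliary parameter $x$---a statement of an entirely different shape, with no explicit dependence on $q$ at all.

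Your own write-up is honest about this: you correctly flag that even the $d=2$ piece of the inclusion--exclusion already demands a bound on the least prime with $\bigl(\tfrac{q}{p}\bigr)=-1$ that is stronger than what GRH gives, and you end by saying the final step ``is precisely the obstruction that keeps the statement conjectural.'' So what you have submitted is a heuristic discussion and a diagnosis of the difficulty, not a proof; and since the paper offers no proof either, the appropriate verdict is simply that the conjecture remains open and neither you nor the paper claims otherwise.
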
 

In addition, the conjecture claims that for any prescribed nonsquared integer $q\geq2$, a small finite subset of Germain primes
\begin{equation} \label{eq3232.310}
\mathcal{G}=\{p=2r+1: r \text{ and } p \text{ are primes}\}=\{3, 5, 7,23,43,\ldots\}
\end{equation}
contains a least prime $p$ such that $q$ is a primitive root modulo $p$, with very  few exeptions, see \cite{MO2021}. The subset of Germain primes is suitable for this application because each of these primes has a very simple primitive root test. A weak version of this conjecture is considered in this note. The precise result is as follows.

\begin{theorem}   \label{thm3232.100}  Let $x \geq 1$ be a large number, and let $q\ne 1, n^2$ be a fixed integer. Then, there exists a prime $p\ll (\log x)^c$ such that $q$ is a primitive root modulo $p$, where $c>0$ is a constant.
\end{theorem}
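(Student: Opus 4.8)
The plan is to fix $y:=(\log x)^{c}$ with $c$ a large constant (depending on $q$) to be chosen, and to prove that
\[
N(y):=\#\{\,p\le y:\ p\nmid q,\ q\text{ is a primitive root mod }p\,\}
\]
is positive, indeed $\gg_{q}\pi(y)$, once $x$ is large; since $N$ is nondecreasing in $y$, this is exactly the assertion of the theorem. I would begin from Vinogradov's detector for primitive roots: for a prime $p\nmid q$,
\[
\mathbf 1[\,q\text{ is a primitive root mod }p\,]=\frac{\phi(p-1)}{p-1}\sum_{d\mid p-1}\frac{\mu(d)}{\phi(d)}\sum_{\substack{\chi\bmod p\\ \ord(\chi)=d}}\chi(q),
\]
and sum over $p\le y$. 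Separating the non-oscillating part (the term $d=1$, together with the $d>1$ terms in which $\chi(q)$ is forced to $1$ because $q$ is ``structurally'' a $d$-th power modulo $p$) gives $N(y)=A(q)\,\pi(y)+E(y)$, where $A(q)$ is the Artin density of $q$ and $E(y)$ collects the genuinely oscillating sums $\sum_{p\le y,\ d\mid p-1}\chi(q)$ over non-principal characters $\chi$.

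For the main term, writing $\phi(p-1)/(p-1)=\sum_{d\mid p-1}\mu(d)/d$, interchanging summations, and using the Siegel--Walfisz theorem for $\#\{p\le y:d\mid p-1\}$ when $d\le(\log y)^{B}$ (for an appropriate constant $B$, with the trivial bound $\le y/d$ beyond), together with the usual evaluation of the Artin density (Hooley) carried out here only for fields of size $\le(\log y)^{B}$ via effective Chebotarev for the Kummer fields $\Q(\zeta_{d},q^{1/d})$, one finds that the non-oscillating part equals $A(q)\,\pi(y)(1+o(1))$. This is where the hypotheses enter: since $q\ne\pm1$ and $q$ is not a square, $A(q)>0$, so the main term is $\gg_{q}y/\log y\to\infty$, and it remains to show $|E(y)|=o(\pi(y))$.

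To bound $E(y)$ I would split the prime divisors $\ell$ of $p-1$ (equivalently, the squarefree $d$) into three ranges. For $\ell\le\xi_{1}:=(\log y)^{\kappa}$ (for a suitable $\kappa>0$; depending on the available unconditional zero-free region one may be forced to take $\xi_{1}$ merely a large constant), the oscillating sums are again controlled by effective Chebotarev for $\Q(\zeta_{d},q^{1/d})$, which gives square-root-type cancellation, and re-summing the M\"obius weights shows this range contributes $o(\pi(y))$. For $\ell>\xi_{2}:=\sqrt y\,\log y$, a prime $p\le y$ for which $q$ is an $\ell$-th power modulo $p$ satisfies $p\mid q^{(p-1)/\ell}-1$ with the exponent $(p-1)/\ell<\sqrt y/\log y$, so all such $p$ divide $\prod_{m<\sqrt y/\log y}(q^{m}-1)$, an integer with $\ll_{q}y(\log y)^{-2}$ distinct prime divisors; hence this range also contributes $o(\pi(y))$ uniformly for fixed $q$ (this is Hooley's elementary device). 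The intermediate range $\xi_{1}<\ell<\xi_{2}$ is the crux, and it is exactly where the classical conditional proof of Artin's conjecture appeals to GRH for the Kummer fields. My plan is to bound $\sum_{\xi_{1}<\ell<\xi_{2}}\#\{p\le y:\ell\mid p-1,\ q\text{ an }\ell\text{-th power mod }p\}$ unconditionally by an averaged estimate — feeding the relevant characters of order $\ell$ into the large sieve (or a Bombieri--Vinogradov-type inequality) and exploiting that here the expected count $\pi(y)/(\ell(\ell-1))$ is so small that a modest saving, on average over $\ell$, already suffices — with $c$ taken large enough to create the required margin. I expect this intermediate range to be the principal obstacle, being precisely the unconditional analogue of the open step in Artin's conjecture; once it is in hand, the remaining bookkeeping is routine, and combining the three ranges gives $|E(y)|=o(\pi(y))$, whence $N(y)\ge\tfrac12 A(q)\,\pi(y)\ge1$ for all large $x$, i.e. there is a prime $p\ll(\log x)^{c}$ with $q$ a primitive root modulo $p$.
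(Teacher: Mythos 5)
Your skeleton is the classical Hooley framework --- Vinogradov's detector, Chebotarev/Siegel--Walfisz for the main term and for small prime divisors $\ell$ of $p-1$, and the elementary ``$p$ divides $\prod_{m<\sqrt y/\log y}(q^m-1)$'' device for $\ell>\sqrt y\log y$ --- and those parts are sound. The genuine gap is the one you flag yourself: the intermediate range $(\log y)^{\kappa}<\ell<\sqrt y\log y$. What you propose there does not work. The large sieve and Bombieri--Vinogradov-type inequalities produce the needed saving either on average over the base $q$ (this is the Goldfeld--Stephens average form of Artin's conjecture, which is where the constant $a_1$ of \eqref{eq3262.020} actually comes from) or on average over many independent moduli; for a single fixed $q$ they give nothing better than the trivial count $\#\{p\le y:\ \ell\mid p-1,\ q \text{ an } \ell\text{-th power mod } p\}\ll \pi(y)/\ell + (\text{secondary terms})$, and the secondary terms do not sum acceptably over $\ell$ up to $\sqrt y$. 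Taking $c$ large buys no margin: the estimate you need lives at the scale $y$ itself and must hold uniformly in $y$, so enlarging $y=(\log x)^c$ merely relabels the problem. Closing this range unconditionally for a fixed $q$ \emph{is} Artin's conjecture; it is open, and your proof is therefore incomplete at exactly the decisive step.

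For comparison, the paper does not close this step either: it swaps your multiplicative-character detector for the divisor-free additive-character detector of Lemma \ref{lem3242.300} and then cites Lemma \ref{lem3282.000} (Lemma 6.1 of \cite{CN2017}) for the bound $\sum_{z\le p\le 2z}p^{-1}\sum_{\gcd(n,p-1)=1}\sum_{0<k\le p-1}e^{i2\pi(\tau^n-q)k/p}\ll z^{1-\varepsilon}/\log z$. A power-saving bound of that shape for fixed $q$ would already yield Artin's conjecture unconditionally, so the cited lemma is the same unresolved obstacle in different notation; in addition, the passage from the second to the third line of \eqref{eq3292.950} inserts the density $\delta(q)$ in front of the $k=0$ contribution, which equals $\varphi(p-1)/p$ and carries no such factor. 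In short, your proposal and the paper's proof diverge in the choice of characteristic function but founder at the same point; your write-up at least localizes the obstruction honestly rather than displacing it into a citation.
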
 

The preliminary notation, definitions, background results are discussed in Section \ref{S3242.100} to Section \ref{S3282.000}. Section \ref{S3292.000} presents a proof of Theorem \ref{thm3232.100}.

\section{Primitive Roots Tests}  \label{S3242.100}
For any prime $p \geq 3$, the multiplicative group $G$ of the prime finite fields $\mathbb{F}_p$ is a cyclic group of cardinality $p-1=\#G$. Similar result is true for any finite extension $\mathbb{F}_q$ of $\mathbb{F}_p$, where $q=p^k$ is a prime power. \\

\begin{dfn} \label{dfn3242.100} {\normalfont The \textit{multiplicative order} of an element $u\ne0$ in the cyclic group $\mathbb{F}_p^\times$ is defined by $\ord_p(u)=\min \{k \in \mathbb{N}: u^k \equiv 1 \bmod p \}$. An element is a \textit{primitive root} if and only if $\ord_p(u)=p-1$. 
}
\end{dfn}

The Euler totient function counts the number of relatively prime integers \(\varphi (n)=\#\{ k:\gcd (k,n)=1 \}\). This counting function is compactly expressed by
the analytic formula \(\varphi (n)=n\prod_{p \mid n}(1-1/p),n\in \mathbb{N} .\)\\

\begin{lem} {\normalfont (Fermat-Euler)} \label{lem3242.200}If \(a\in \mathbb{Z}\) is an integer such that \(\gcd (a,n)=1,\) then \(a^{\varphi (n)}\equiv
	1 \bmod n\).
\end{lem}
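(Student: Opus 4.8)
The plan is to deduce the congruence from Euler's classical permutation of a reduced residue system, which keeps the argument self-contained; at the end I note how the same fact also drops out of Lagrange's theorem. First I would fix a reduced residue system modulo $n$: let $r_1, r_2, \ldots, r_{\varphi(n)}$ be the distinct residues in $\{1, 2, \ldots, n\}$ that are coprime to $n$, of which there are exactly $\varphi(n)$ by the definition $\varphi(n)=\#\{k:\gcd(k,n)=1\}$ recalled above.

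The key step is to show that multiplication by $a$ permutes this system modulo $n$. For each $i$, the coprimalities $\gcd(a,n)=1$ and $\gcd(r_i,n)=1$ give $\gcd(ar_i,n)=1$, so $ar_i$ is congruent modulo $n$ to some $r_j$. The induced map is injective: if $ar_i \equiv ar_j \bmod n$, then choosing $a'$ with $aa'\equiv 1 \bmod n$ (which exists since $\gcd(a,n)=1$) and multiplying through by $a'$ yields $r_i \equiv r_j \bmod n$, hence $i=j$. An injective self-map of a finite set is a bijection, so the list $ar_1, \ldots, ar_{\varphi(n)}$ is, modulo $n$, a rearrangement of $r_1, \ldots, r_{\varphi(n)}$.

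Comparing the two products then gives
\[
a^{\varphi(n)}\prod_{i=1}^{\varphi(n)} r_i = \prod_{i=1}^{\varphi(n)}(ar_i) \equiv \prod_{i=1}^{\varphi(n)} r_i \pmod n.
\]
Writing $R=\prod_{i=1}^{\varphi(n)} r_i$, each factor is coprime to $n$, so $R$ is coprime to $n$ and hence invertible modulo $n$; multiplying both sides by the inverse of $R$ cancels it and leaves $a^{\varphi(n)}\equiv 1 \bmod n$, as claimed.

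The one step demanding care, and the crux of the matter, is precisely this cancellation: modulo a composite $n$ one may cancel only factors that are coprime to $n$, which is exactly why the hypothesis $\gcd(a,n)=1$ (together with the coprimality of each $r_i$) cannot be dropped. Everything else is bookkeeping. For perspective, the same conclusion follows in one line from Lagrange's theorem applied to the multiplicative group $(\Z/n\Z)^\times$ of order $\varphi(n)$; the permutation argument above is simply a hands-on proof of that special case, and specializing to a prime $n=p$ recovers the statement for the cyclic group $\F_p^\times$ used elsewhere in this note.
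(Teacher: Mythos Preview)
Your argument is correct: it is Euler's classical permutation-of-residues proof, with the cancellation step justified carefully, and the remark about Lagrange's theorem is apt. There is nothing to compare against, however, since the paper states this lemma as a named classical fact without supplying any proof; your write-up therefore goes beyond what the paper does rather than diverging from it.
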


\begin{lem} \label{lem3242.500}  {\normalfont (Primitive root test in $\mathbb{F}_p$)} An integer $u \in \Z$ is a primitive root modulo an integer $n \in \N$ if and only if 
\begin{equation}\label{eq3242.500}
u^{\varphi(n)/p} -1\not \equiv 0 \mod  n
\end{equation}
for all prime divisors $p \mid \varphi(n)$.
\end{lem}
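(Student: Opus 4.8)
The plan is to reduce everything to the single elementary fact that the multiplicative order $e=\ord_n(u)$ divides the group order $\varphi(n)$, after which the stated criterion falls out by comparing $e$ against the proper divisors $\varphi(n)/p$. Throughout I take $\gcd(u,n)=1$ as the standing hypothesis (implicit in calling $u$ a primitive root, and already assumed in Lemma \ref{lem3242.200}); if $\gcd(u,n)>1$ then $u$ is not a unit modulo $n$ and cannot be a primitive root, so there is nothing to prove. First I would record that $e\mid\varphi(n)$: since $u^{\varphi(n)}\equiv 1 \bmod n$ by Lemma \ref{lem3242.200}, the division algorithm gives $\varphi(n)=be+r$ with $0\leq r<e$, whence $u^{r}\equiv u^{\varphi(n)}\cdot (u^{e})^{-b}\equiv 1 \bmod n$, and the minimality of $e$ in Definition \ref{dfn3242.100} forces $r=0$.

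For the forward implication, suppose $u$ is a primitive root, i.e.\ $e=\varphi(n)$. For every prime $p\mid\varphi(n)$ the exponent $\varphi(n)/p$ satisfies $0<\varphi(n)/p<\varphi(n)=e$, so by the definition of $e$ as the least positive exponent with $u^{e}\equiv 1$ we obtain $u^{\varphi(n)/p}-1\not\equiv 0 \bmod n$, which is exactly \eqref{eq3242.500}. For the converse I would argue by contraposition: assume $u$ is not a primitive root, so $e\mid\varphi(n)$ with $e<\varphi(n)$, i.e.\ $\varphi(n)/e>1$. Choosing any prime $p$ dividing $\varphi(n)/e$ and writing $\varphi(n)=epm$ with $m\in\N$, the exponent $\varphi(n)/p=em$ is a multiple of $e$, so $u^{\varphi(n)/p}=(u^{e})^{m}\equiv 1 \bmod n$, violating \eqref{eq3242.500}. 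Equivalently, if \eqref{eq3242.500} holds for all $p\mid\varphi(n)$, then $e$ cannot be a proper divisor of $\varphi(n)$, which together with $e\mid\varphi(n)$ forces $e=\varphi(n)$.

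There is no genuine obstacle here: the whole argument is a comparison of the order $e$ with the maximal proper divisors $\varphi(n)/p$ of $\varphi(n)$, and the only nontrivial inputs are Lemma \ref{lem3242.200} and the divisibility $e\mid\varphi(n)$. The one point I would flag explicitly is the standing coprimality hypothesis $\gcd(u,n)=1$, which is needed for the ``only if'' direction to be meaningful, so that the test is not vacuously satisfied by non-units.
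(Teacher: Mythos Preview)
Your argument is correct and complete. The paper does not actually supply its own proof of this lemma: it simply remarks that the test is a special case of the Lucas primality test and points to \cite{LE1878} and \cite[Theorem 4.1.1]{CP2005}. Your write-up therefore fills in what the paper outsources, via the standard order-divides-exponent reduction, and there is nothing to compare at the level of method.

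One small wording slip: in your final paragraph you flag the coprimality hypothesis as needed for the ``only if'' direction, but the direction that breaks without $\gcd(u,n)=1$ is the ``if'' direction. If $\gcd(u,n)>1$ then $u^{k}\equiv 1\bmod n$ is impossible for every $k\geq 1$, so condition \eqref{eq3242.500} is trivially satisfied for every prime $p\mid\varphi(n)$ while $u$ is certainly not a primitive root. The forward implication is vacuous in that case. This does not affect the mathematics of your proof, only the label in the closing caveat.
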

The primitive root test is a special case of the Lucas primality test, introduced in \cite[p.\ 302]{ LE1878}. A more recent version appears in \cite[Theorem 4.1.1]{CP2005}, and similar sources. \\

The Carmichael function is basically a refinement of the Euler totient function to the finite ring \(\mathbb{Z}/n \mathbb{Z}\). 

\begin{dfn} Given an integer
\(n=p_1^{v_1}p_2^{v_2}\cdots  p_t^{v_t}\), the Carmichael function is defined by
\begin{equation}
	\lambda (n)=\lcm \left (\lambda \left(p_1^{v_1}\right),\lambda \left (p_2^{v_2}\right ) \cdots  \lambda \left (p_t^{v_t}\right ) \right )
	=\prod _{p^v  \mid \mid  \lambda (n)} p^v,
\end{equation}
where the symbol \(p^v \mid \mid n,\nu \geq 0\), denotes the maximal prime power divisor of \(n\geq 1\), and 
\begin{equation}
	\lambda
	\left(p^v\right)= \left \{
	\begin{array}{ll}
		\begin{array}{ll}
			\varphi \left(p^v\right) 
			& \text{ if } p\geq 3\text{ or } v\leq 2,  \\
			2^{v-2} 
			& \text{ if } p=2 \text{ and }v \geq 3. \\
		\end{array}
		
	\end{array} \right . 
\end{equation}
\end{dfn} 
The two functions coincide, that is, \(\varphi(n)=\lambda (n)\) if \(n=2,4,p^m,\text{ or } 2p^m,m\geq 1\). And \(\varphi \left(2^m\right)=2\lambda
\left(2^m\right)\). In a few other cases, there are some simple relationships between \(\varphi (n) \text{ and } \lambda (n)\). In fact, it seamlessly
improves the Fermat-Euler Theorem: The improvement provides the least exponent \(\lambda (n) \mid  \varphi (n)\) such that \(a^{\lambda (n)}\equiv
1 \bmod n\). The ratio $\varphi (n)/\lambda (n) \geq1$ has many interesting properties studied in the literature.\\

\begin{lem} \label{lem2.2} { \normalfont (\cite{CR1910})}  Let \(n\in \mathbb{N}\) be any given integer. Then
\begin{enumerate} [font=\normalfont, label=(\roman*)]
\item The congruence \(a^{\lambda (n)}\equiv
1 \bmod n\) is satisfied by every integer \(a\geq 1\) relatively prime to \(n\), that is \(\gcd (a,n)=1\).

\item In every congruence \(x^{\lambda (n)}\equiv 1 \bmod n\), a solution \(x=u\) exists which is a primitive root \(\bmod  n\), and for any such
solution \(u\), there are \(\varphi (\lambda (n))\) primitive roots congruent to powers of \(u\).
\end{enumerate} 
\end{lem}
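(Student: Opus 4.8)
The plan is to prove the two parts separately, using the Chinese Remainder Theorem (CRT) together with the structure of the local unit groups $(\Z/p^v\Z)^\times$ as the common engine. Writing $n = p_1^{v_1}\cdots p_t^{v_t}$, the ring isomorphism $\Z/n\Z \cong \prod_i \Z/p_i^{v_i}\Z$ restricts to a group isomorphism $(\Z/n\Z)^\times \cong \prod_i (\Z/p_i^{v_i}\Z)^\times$, so every statement about multiplicative orders modulo $n$ reduces to the coordinatewise statement modulo each $p_i^{v_i}$ via the identity $\ord_n(a) = \lcm_i \ord_{p_i^{v_i}}(a)$. This is the reduction I would set up first.

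For part (i) I would record the exponent of each local factor. For odd $p$ the group $(\Z/p^v\Z)^\times$ is cyclic of order $\varphi(p^v)$, so its exponent is $\varphi(p^v) = \lambda(p^v)$; for $p = 2$ the group is trivial ($v=1$), cyclic of order $2$ ($v=2$), or isomorphic to $\Z/2 \times \Z/2^{v-2}$ ($v \geq 3$), whose exponent is $2^{v-2} = \lambda(2^v)$. Hence for any $a$ with $\gcd(a,n)=1$, and therefore $\gcd(a,p_i^{v_i})=1$, one has $a^{\lambda(p_i^{v_i})} \equiv 1 \bmod p_i^{v_i}$ for each $i$; since $\lambda(p_i^{v_i}) \mid \lambda(n) = \lcm_i \lambda(p_i^{v_i})$ by the definition of $\lambda$, raising to the appropriate quotient exponent gives $a^{\lambda(n)} \equiv 1 \bmod p_i^{v_i}$ for every $i$. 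Reassembling through CRT yields $a^{\lambda(n)} \equiv 1 \bmod n$, which is part (i). The same local computation identifies $\lambda(n)$ as the exponent of $(\Z/n\Z)^\times$, since the exponent of a product of groups is the lcm of the exponents of the factors; this is the fact part (ii) leans on.

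For part (ii) I would first note that ``primitive root $\bmod n$'' is meant here in Carmichael's generalized sense of an element of maximal order $\lambda(n)$, which agrees with Definition \ref{dfn3242.100} exactly when $(\Z/n\Z)^\times$ is cyclic. I would then construct such an element explicitly and count. In each local factor choose $g_i$ of order exactly $\lambda(p_i^{v_i})$: a classical primitive root $\bmod p_i^{v_i}$ when that factor is cyclic, and the residue $5$ (a generator of the $\Z/2^{v-2}$ part) when $p_i = 2$ and $v_i \geq 3$. By CRT select $u$ with $u \equiv g_i \bmod p_i^{v_i}$ for all $i$; then $\ord_n(u) = \lcm_i \lambda(p_i^{v_i}) = \lambda(n)$, so $u$ is a primitive root and in particular solves $x^{\lambda(n)} \equiv 1 \bmod n$. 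For the counting claim, fix any $u$ of order $\lambda(n)$: the cyclic group $\langle u \rangle$ has order $\lambda(n)$, and the power $u^k$ has order $\lambda(n)/\gcd(k,\lambda(n))$, which equals $\lambda(n)$ precisely when $\gcd(k,\lambda(n))=1$. The number of residues $k \bmod \lambda(n)$ with this property is $\varphi(\lambda(n))$, so exactly $\varphi(\lambda(n))$ powers of $u$ are primitive roots.

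The main obstacle is the $2$-power factor, where $(\Z/2^v\Z)^\times$ fails to be cyclic for $v \geq 3$: one must verify that its exponent is genuinely $2^{v-2}$ rather than $2^{v-1}$ and exhibit an explicit element attaining it, which is exactly where the nonstandard value $\lambda(2^v) = 2^{v-2}$ enters both parts. Everything else is the standard cyclic-group count of generators transported through CRT, so once the local structure at $2$ is pinned down, both parts follow by assembly.
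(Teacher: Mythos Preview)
Your argument is correct. For part (i) you follow essentially the same route as the paper---reduce modulo each prime power $p_i^{v_i}$, use that $\lambda(p_i^{v_i})\mid\lambda(n)$, and reassemble via CRT---but you are more careful: the paper's one-line proof asserts $\lambda(p^v)=\varphi(p^v)$ for every prime power dividing $n$, which is false at $p=2$, $v\geq 3$, whereas you handle that case explicitly by identifying the exponent of $(\Z/2^v\Z)^\times$ as $2^{v-2}$. For part (ii) the paper gives no proof at all, so your explicit construction (choosing a local element of maximal order in each factor, lifting by CRT, then counting generators of the cyclic group $\langle u\rangle$) genuinely supplies what the paper omits. The only thing worth tightening is the case split at $2$: for $v=1,2$ you should simply take $g_i$ to be a generator of the cyclic group $(\Z/2^v\Z)^\times$ rather than jumping to $5$, but you already say as much.
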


\begin{proof} (i) The number \(\lambda (n)\) is a multiple of every \(\lambda \left(p^v\right)=\varphi\left(p^v\right) \) such that \(p^v \mid  n\). Ergo, for any relatively prime integer \(a\geq 2\), the system of congruences 
\begin{equation}
a^{\lambda (n)}\equiv 1\bmod p_1^{v_1}, \quad a^{\lambda (n)}\equiv 1\bmod p_2^{v_2}, \quad \ldots, \quad a^{\lambda (n)}\equiv 1\bmod p_t^{v_t},
\end{equation}
where \(t=\omega (n)\) is the number of prime divisors in \(n\), is valid. 
\end{proof}

\begin{dfn}
	An integer \(u\in \mathbb{Z}\) is called a \textit{$\lambda$-primitive root} \(\text{mod } n \) if the least exponent \(\min  \left\{ m\in \mathbb{N}:u^m\equiv 1 \bmod n \right\}=\lambda
	(n)\).
\end{dfn}

\begin{lem} \label{lem3242.600}  {\normalfont (Primitive root test in $\mathbb{Z}/n\mathbb{Z}$)} An integer $u \in \Z$ is a primitive root modulo an integer $n \in \N$ if and only if 
\begin{equation}\label{eq3242.600}
u^{\lambda(n)/p} -1\not \equiv 0 \mod  n
\end{equation}
for all prime divisors $p \mid \lambda(n)$.
\end{lem}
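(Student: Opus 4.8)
The plan is to reduce everything to the multiplicative order $d := \ord_n(u)$ of $u$ in $(\Z/n\Z)^\times$. Here ``primitive root modulo $n$'' is read, consistently with the $\lambda$-primitive root definition above, as $d = \lambda(n)$, and I take $\gcd(u,n)=1$ throughout, as is implicit in calling $u$ a primitive root; this hypothesis will be seen to be genuinely necessary. The single structural input is Lemma~\ref{lem2.2}(i): since $u^{\lambda(n)} \equiv 1 \bmod n$, the order $d$ divides $\lambda(n)$. Every subsequent step is an elementary divisibility manipulation on exponents.

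For the forward implication, assume $d = \lambda(n)$. Were $u^{\lambda(n)/p} \equiv 1 \bmod n$ to hold for some prime $p \mid \lambda(n)$, minimality of the order would give $d \mid \lambda(n)/p$, hence $d \le \lambda(n)/p < \lambda(n)$, contradicting $d = \lambda(n)$. Thus~\eqref{eq3242.600} holds for every prime divisor of $\lambda(n)$.

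For the converse I would argue by contraposition: suppose $d \neq \lambda(n)$. Because $d \mid \lambda(n)$, the integer $\lambda(n)/d$ exceeds $1$ and so has a prime factor $p$. Writing $\lambda(n) = dm$ with $p \mid m$, the exponent $\lambda(n)/p = d\,(m/p)$ is a multiple of $d$, whence $u^{\lambda(n)/p} = (u^{d})^{m/p} \equiv 1 \bmod n$; so~\eqref{eq3242.600} fails for this $p$. This closes the equivalence.

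No step presents a real obstacle --- the argument is the $\lambda$-analogue of the classical test in Lemma~\ref{lem3242.500}, valid because $\lambda(n)$ is a universal exponent for $(\Z/n\Z)^\times$. The one point that must not be glossed over is the coprimality hypothesis: if $\gcd(u,n)>1$ then $u$ is a non-unit, hence never a primitive root, yet the congruences~\eqref{eq3242.600} can still hold accidentally (for instance $n=4$, $u=2$, where $\lambda(4)=2$ and $u^{1}-1 = 1 \not\equiv 0 \bmod 4$). Thus $\gcd(u,n)=1$ is indispensable for the ``if'' direction, and I would state it explicitly as a standing assumption rather than leave it buried in the definition.
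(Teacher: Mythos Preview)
Your argument is correct and is the standard order-divisibility proof of this classical fact. The paper itself states Lemma~\ref{lem3242.600} without proof, so there is no authorial argument to compare against; your write-up would serve perfectly well as the missing justification, and your explicit flagging of the coprimality hypothesis $\gcd(u,n)=1$ is a genuine improvement over the paper's bare statement.
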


\begin{lem} \label{lem3242.700}  {\normalfont (Primitive root lift)} Let $n$, and $u \in \mathbb{N}$ be integers, $\gcd(u,n)=1$. If $u$ is a primitive root modulo $p^k$ for each prime power divisor $p^k \mid n$, then, the integer $u \ne \pm 1, v^2$ is a primitive root modulo $n$.
\end{lem}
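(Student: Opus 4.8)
The plan is to pass to the prime-power factorization of $n$ and use the Chinese Remainder Theorem to reduce the order computation modulo $n$ to order computations modulo the prime powers $p_i^{v_i}$ that divide $n$, which are exactly the quantities the hypothesis controls. Write $n=p_1^{v_1}p_2^{v_2}\cdots p_t^{v_t}$. First, the ring isomorphism $\mathbb{Z}/n\mathbb{Z}\cong \prod_{i=1}^t \mathbb{Z}/p_i^{v_i}\mathbb{Z}$ restricts to a group isomorphism of unit groups $(\mathbb{Z}/n\mathbb{Z})^\times\cong \prod_{i=1}^t (\mathbb{Z}/p_i^{v_i}\mathbb{Z})^\times$, under which $u$ corresponds to the tuple $(u \bmod p_1^{v_1},\ldots,u \bmod p_t^{v_t})$; this uses $\gcd(u,n)=1$, which is assumed. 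Since the order of an element of a finite direct product of groups is the least common multiple of the orders of its coordinates,
\[
\ord_n(u)=\lcm\left(\ord_{p_1^{v_1}}(u),\ \ord_{p_2^{v_2}}(u),\ \ldots,\ \ord_{p_t^{v_t}}(u)\right).
\]

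Next I would feed in the hypothesis. For each $i$, taking the maximal prime power $p_i^{v_i}\mid n$, the assumption that $u$ is a primitive root modulo $p_i^{v_i}$ means, by Lemma \ref{lem3242.600} (equivalently, by the definition of a $\lambda$-primitive root), that $\ord_{p_i^{v_i}}(u)=\lambda(p_i^{v_i})$. Substituting into the displayed identity and invoking the very definition of the Carmichael function,
\[
\ord_n(u)=\lcm\left(\lambda(p_1^{v_1}),\lambda(p_2^{v_2}),\ldots,\lambda(p_t^{v_t})\right)=\lambda(n).
\]
By Lemma \ref{lem3242.600} again, an integer whose multiplicative order modulo $n$ equals $\lambda(n)$ is a primitive root modulo $n$, which is the assertion. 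Note the hypothesis, stated for every prime power divisor $p^k\mid n$, is more than is needed: only the maximal powers $p_i^{v_i}$ enter the argument.

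The step requiring the most care is the passage from ``$u$ is a primitive root modulo each $p_i^{v_i}$'' to ``$u$ is a primitive root modulo $n$'' when $n$ is \emph{not} one of the moduli $2,4,p^m,2p^m$ whose unit group is cyclic: for such $n$ there is no primitive root in the classical sense, and ``primitive root modulo $n$'' must be read as $\lambda$-primitive root, i.e.\ an element of maximal order $\lambda(n)$ in $(\mathbb{Z}/n\mathbb{Z})^\times$ --- which is precisely the notion pinned down by Lemma \ref{lem3242.600}, so the lcm-of-orders computation above is exactly what is called for. Beyond that, the only bookkeeping is in the Carmichael components themselves, in particular the case $p=2$, $v\geq 3$, where $\lambda(2^v)=2^{v-2}<\varphi(2^v)$ and $(\mathbb{Z}/2^v\mathbb{Z})^\times$ is already non-cyclic. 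Finally, the side conditions $u\ne\pm 1,v^2$ are inherited from the ambient hypothesis on $q$ and play no role in the argument; they merely discard the degenerate small $n$ for which $\lambda(n)\leq 2$.
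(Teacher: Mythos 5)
Your proof is correct, but it takes a genuinely different route from the paper's. You argue directly: decompose $(\mathbb{Z}/n\mathbb{Z})^\times$ by the Chinese Remainder Theorem, use the fact that the order of an element of a direct product is the least common multiple of the coordinate orders, feed in $\ord_{p_i^{v_i}}(u)=\lambda(p_i^{v_i})$ from the hypothesis, and recognize $\lcm\bigl(\lambda(p_1^{v_1}),\ldots,\lambda(p_t^{v_t})\bigr)=\lambda(n)$ from the very definition of the Carmichael function. The paper instead reduces ``without loss of generality'' to $n=pq$ a product of two primes and argues by contradiction: it assumes $u^{\lambda(n)/t}\equiv 1 \bmod n$ for some prime $t\mid\lambda(n)$, matches the $t$-adic valuation of $\lambda(n)$ with that of $p-1$ or $q-1$, and derives a violation of the primitive root test (Lemma \ref{lem3242.500}) modulo $p$ or modulo $q$. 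Your version is the stronger of the two: it handles the full factorization at once (including the non-cyclic factor $(\mathbb{Z}/2^v\mathbb{Z})^\times$ for $v\geq 3$, which the paper's two-odd-prime reduction quietly sidesteps), it avoids the paper's miswritten identity $\lambda(n)=\lcm(\varphi(p-1),\varphi(q-1))$ (which should be $\lcm(p-1,q-1)$), and your explicit remark that ``primitive root modulo $n$'' must mean $\lambda$-primitive root when $(\mathbb{Z}/n\mathbb{Z})^\times$ is not cyclic addresses the one genuine ambiguity in the statement. What the paper's valuation argument buys in exchange is that it stays entirely in the language of the nonvanishing congruences $u^{\lambda(n)/t}\not\equiv 1$, the form of the test used elsewhere in the paper, and exhibits the concrete mechanism by which a failure modulo $n$ propagates down to a failure modulo one of the prime factors.
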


\begin{proof} Without loss in generality, let $n=pq$ with $p\geq 2$ and $q\geq 2$ primes. Let $u$ be a primitive root modulo $p$ and modulo $q$ respectively. Then
\begin{equation} \label{eq3242.700}
u^{(p-1)/r}  -1\not \equiv 0 \bmod  p  \qquad  \text{  and  } \qquad u^{(q-1)/s} -1 \not \equiv 0 \bmod  q,
\end{equation}
for every prime $r \mid p-1$, and every prime $s \mid q-1$ respectively, see Lemma \ref{lem3242.500}. Now, suppose that $u$ is not a primitive root modulo $n$. In particular,
\begin{equation}\label{eq3242.720}
u^{\lambda(n)/t} -1\equiv 0 \bmod  n  
\end{equation}
for some prime divisor $t \mid \lambda(n)$.\\

Let $v_t(\lambda(n))$, $v_t(p-1)$, and $v_t(q-1)$ be the $t$-adic valuations of these integers. Since $\lambda(n)=\lcm(\varphi(p-1), \varphi(q-1))$, it follows that at least one of the relations
\begin{equation}\label{eq3242.730}
v_t(\lambda(n))=v_t(p-1)  \qquad  \text{ or } \qquad v_t(\lambda(n))=v_t(q-1) 
\end{equation}
is valid. As consequence, at least one of the congruence equations
\begin{equation}\label{eq3242.740}
u^{\lambda(n)/t} -1\equiv 0 \bmod  n  \qquad  \Longleftrightarrow \qquad u^{\lambda(n)/t}-1 \equiv 0\mod p
\end{equation}
or
\begin{equation}\label{eq3242.750}
u^{\lambda(n)/t} -1\equiv 0 \bmod  n  \qquad  \Longleftrightarrow \qquad u^{\lambda(n)/t}-1 \equiv0 \mod q
\end{equation}
fails. But, this in turns, contradicts the relations in (\ref{eq3242.700}) that $u$ is a primitive root modulo both $p$ and $q$. Therefore, $u$ is a primitive root modulo $n$. 
\end{proof}

\begin{lem} \label{lem3242.850} The integer $2$ is a quadratic residue, {\normalfont (}quadratic nonresidue{\normalfont )} of the primes of the form $p=8k\pm1$, {\normalfont (}$p=8k\pm3$ respectively{\normalfont )}. Equivalently,
\begin{equation}\label{eq3242.850}
\left ( \frac{2}{p}\right )=(-1)^{\frac{p^2-1}{8}}.
\end{equation}
\end{lem}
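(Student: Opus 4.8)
The plan is to establish this as the second supplement to the law of quadratic reciprocity, and I see two natural routes; I would lead with the splitting-field (Gauss-sum) argument because it sidesteps floor-function bookkeeping, and mention the classical Gauss-lemma count as an alternative.

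For the main argument, let $\zeta$ be a primitive eighth root of unity in $\overline{\F}_p$ (it exists since $p\geq 3$ is odd) and set $\tau=\zeta+\zeta^{-1}$. From $\zeta^4=-1$ one gets $\zeta^2+\zeta^{-2}=0$, hence $\tau^2=\zeta^2+2+\zeta^{-2}=2$ in $\overline{\F}_p$; in particular $\tau$ is a unit. Next I would apply the Frobenius: in characteristic $p$, $\tau^p=(\zeta+\zeta^{-1})^p=\zeta^p+\zeta^{-p}$. Using $\zeta^{\pm 3}=-\zeta^{\mp 1}$ (again from $\zeta^4=-1$), this collapses to $\tau^p=\tau$ when $p\equiv\pm1\bmod 8$ and $\tau^p=-\tau$ when $p\equiv\pm3\bmod 8$. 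Dividing by $\tau$ gives $2^{(p-1)/2}=(\tau^2)^{(p-1)/2}=\tau^{p-1}=\pm1$ according to the same split, and Euler's criterion identifies this sign with $\left(\frac{2}{p}\right)$. This already yields the residue/nonresidue dichotomy.

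To obtain the displayed closed form, I would note the elementary congruences $p^2-1\equiv 0\bmod{16}$ when $p\equiv\pm1\bmod 8$ and $p^2-1\equiv 8\bmod{16}$ when $p\equiv\pm3\bmod 8$ (immediate from writing $p=8k+r$ and squaring), so $(p^2-1)/8$ is even in the first case and odd in the second. Hence $(-1)^{(p^2-1)/8}$ matches $\left(\frac{2}{p}\right)$ in all four residue classes mod $8$, completing the proof. As an alternative I would sketch the Gauss-lemma computation: applied to $a=2$, the relevant count is $\mu=\frac{p-1}{2}-\lfloor p/4\rfloor$, the number of elements of $\{2,4,\ldots,p-1\}$ exceeding $p/2$, and a case check mod $8$ shows $\mu$ is even exactly when $p\equiv\pm1\bmod 8$.

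There is no real obstacle here; the statement is classical and both proofs are short. The only place demanding care is the parity bookkeeping — correctly tracking the sign in $\zeta^{\pm 3}=-\zeta^{\mp 1}$ in the splitting-field proof, or evaluating $\lfloor p/4\rfloor$ in each residue class in the Gauss-lemma proof — and verifying that $(-1)^{(p^2-1)/8}$ really does toggle at the right classes, which is the content of the two congruences above.
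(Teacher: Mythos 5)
Your argument is correct, but it is worth noting that the paper does not actually prove this lemma: its ``proof'' is a one-line citation to Rose's textbook (Theorem 1.5 of \cite{RH1994}), since the statement is just the second supplement to quadratic reciprocity. You instead supply a complete, self-contained derivation, and both of your routes check out. In the splitting-field argument, $\zeta^4=-1$ correctly gives $\zeta^2+\zeta^{-2}=0$, hence $\tau^2=2$; the Frobenius computation $\tau^p=\zeta^p+\zeta^{-p}$ together with $\zeta^{\pm3}=-\zeta^{\mp1}$ yields $\tau^{p-1}=+1$ for $p\equiv\pm1\bmod 8$ and $-1$ for $p\equiv\pm3\bmod 8$, and Euler's criterion converts this into the value of $\left(\frac{2}{p}\right)$. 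Your parity check that $(p^2-1)/8$ is even precisely when $p\equiv\pm1\bmod 8$ (via $p^2-1\equiv 0$ or $8\bmod{16}$) correctly matches the closed form \eqref{eq3242.850}, and the Gauss-lemma count $\mu=\frac{p-1}{2}-\lfloor p/4\rfloor$ is also right in all four residue classes. What your version buys over the paper's is self-containment; what the citation buys is brevity, since the lemma is entirely classical and plays only an illustrative role in Section \ref{S3242.100}.
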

\begin{proof}A detailed proof of the quadratic reciprocity laws appears in \cite[Theorem 1.5.]{RH1994}, and similar references.
\end{proof}
\begin{lem} \label{eq3242.860}  {\normalfont (Quadratic reciprocity law)} If $p$ and $q$ are odd primes, then
\begin{equation}\label{eq3242.870}
\left ( \frac{p}{q}\right )\left ( \frac{q}{p}\right )=(-1)^{\frac{p-1}{2}\frac{q-1}{2}}.
\end{equation}
\end{lem}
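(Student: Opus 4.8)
The plan is to prove the reciprocity law by the classical lattice-point counting argument of Eisenstein, which refines Gauss's lemma; this keeps the proof self-contained and elementary, in keeping with the spirit of Section \ref{S3242.100}. First I would establish \emph{Gauss's lemma}: for an odd prime $p$ and an integer $a$ with $\gcd(a,p)=1$, consider the least positive residues of $a, 2a, \ldots, \frac{p-1}{2}a$ modulo $p$, and let $\mu$ denote how many of them exceed $p/2$. Then $\left(\frac{a}{p}\right)=(-1)^\mu$. The argument forms the product of all $\frac{p-1}{2}$ multiples: on one side it equals $a^{(p-1)/2}\left(\frac{p-1}{2}\right)!$, while on the other the absolute least residues are a permutation of $1,2,\ldots,\frac{p-1}{2}$, each residue exceeding $p/2$ contributing one factor of $-1$, for a total sign $(-1)^\mu$. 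Cancelling $\left(\frac{p-1}{2}\right)!$ and invoking Euler's criterion $a^{(p-1)/2}\equiv\left(\frac{a}{p}\right)\bmod p$ yields the claim.

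Next I would upgrade this to Eisenstein's form for odd $a$. Writing $ka=p\lfloor ka/p\rfloor + r_k$ with $0<r_k<p$, and comparing $\sum_{k} r_k$ against the identity $\sum_{k=1}^{(p-1)/2} k$ obtained from the permutation in Gauss's lemma, a short parity computation modulo $2$ shows that $(a-1)\sum_{k=1}^{(p-1)/2} k \equiv \sum_{k=1}^{(p-1)/2}\lfloor ka/p\rfloor - \mu \bmod 2$. Since $a$ is odd, the left-hand side vanishes, giving $\mu \equiv \sum_{k=1}^{(p-1)/2}\lfloor ka/p\rfloor \bmod 2$, and hence $\left(\frac{a}{p}\right)=(-1)^{\sum_k \lfloor ka/p\rfloor}$. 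Applying this with $a=q$, and symmetrically with the roles of $p$ and $q$ interchanged, produces
\begin{equation}
\left(\frac{q}{p}\right)\left(\frac{p}{q}\right) = (-1)^{\sum_{k=1}^{(p-1)/2}\lfloor kq/p\rfloor + \sum_{j=1}^{(q-1)/2}\lfloor jp/q\rfloor}.
\end{equation}

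Finally I would evaluate the exponent geometrically. The two sums count the integer points $(k,j)$ in the rectangle $1\le k\le\frac{p-1}{2}$, $1\le j\le\frac{q-1}{2}$ lying strictly below, respectively strictly above, the line $pj=qk$; because $p\ne q$ are primes no lattice point lies on the line, so the two counts partition the whole rectangle and sum to $\frac{p-1}{2}\cdot\frac{q-1}{2}$. This gives exactly the exponent $(-1)^{\frac{p-1}{2}\frac{q-1}{2}}$ asserted in \eqref{eq3242.870}. The hard part will be the parity bookkeeping in the passage from $(-1)^\mu$ to $(-1)^{\sum\lfloor ka/p\rfloor}$: one must track carefully how summing $ka=p\lfloor ka/p\rfloor+r_k$ and comparing against the sum of absolute least residues isolates $\mu$ modulo $2$, and exactly where the oddness of $a$ enters. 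By contrast, the concluding lattice-count identity is geometrically transparent once the configuration is set up.
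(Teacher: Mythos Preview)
Your proposal is correct: the Gauss--Eisenstein lattice-point argument you sketch is the standard elementary proof, and your outline of the parity bookkeeping (from $(-1)^\mu$ to $(-1)^{\sum_k\lfloor ka/p\rfloor}$ via the identity $(a-1)\sum_{k=1}^{(p-1)/2}k\equiv\sum_k\lfloor ka/p\rfloor-\mu\bmod 2$, using oddness of $a$) and of the final rectangle count is accurate.

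However, your approach differs from the paper's in a trivial but notable way: the paper does not prove this lemma at all. Its ``proof'' consists of a single sentence referring the reader to \cite[Theorem 2.1]{RH1994}. So where the paper treats quadratic reciprocity as a black box from the literature, you supply a complete self-contained argument. What your approach buys is independence from the external reference and consistency with the elementary tone of Section~\ref{S3242.100}; what the paper's approach buys is brevity, since the result is classical and not the focus of the article. Either choice is defensible here.
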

\begin{proof}A detailed proof of the quadratic reciprocity laws appears in \cite[Theorem 2.1.]{RH1994}, and similar references.
\end{proof}

\section{Very Short Primitive Roots Tests}  \label{S3252.200}
The set of Fermat primes 
\begin{equation}\label{lem3252.150}
\mathcal{F}=\left \{F_n=2^{2^n}+1:n\geq0\right \}= \left \{3,5,17,257,65537,\ldots\right \}
\end{equation}
has the simplest primitive root test: A quadratic nonresidue $q\geq3$ is a primitive root mod $F_n$. This follows from Lemma \ref{lem3242.500}. The next set of primes with a short primitive root test seems to be the set of generalized Germain primes.
\begin{dfn} \label{dfn3252.500} {\normalfont Let $s\geq 1$ be a parameter. The set of generalized Germain primes is defined by 
\begin{equation} \label{eq3252.340}
\mathcal{G}_s=\{p=2^s\cdot r+1: p \text{ and } r \text{ are primes}\}=\{3, 5, 7,13,17,23,29,37,43,41,73,\ldots\}\nonumber.
\end{equation}
}
\end{dfn}

\begin{lem} \label{lem3252.550} An integer $q\ne\pm1, n^2$ is a primitive root modulo a Germain prime $p=2^s\cdot r+1$ if and only if 
\begin{multicols}{2}
\begin{enumerate}[font=\normalfont, label=(\roman*)]
\item $\displaystyle  q^{2^{s-1}r}\not \equiv 1 \mod  p$,      
\item $\displaystyle q^{2^{s}}\not \equiv 1 \mod  p.$
\end{enumerate}
\end{multicols}
\end{lem}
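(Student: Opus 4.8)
The plan is to obtain the lemma as a direct specialization of the primitive root test of Lemma \ref{lem3242.500} to the prime modulus $n = p$. First I would record the arithmetic of $p-1$: since $p = 2^s r + 1$ we have $\varphi(p) = p-1 = 2^s r$, and because $r$ is prime the set of distinct prime divisors of $p-1$ is $\{2, r\}$ when $r$ is odd, and $\{2\}$ in the degenerate case $r = 2$ (where $p = 2^{s+1}+1$). Moreover $\gcd(q,p) = 1$ holds for every prime $p$ not dividing the fixed integer $q$, so the test applies; and the exclusion $q \ne \pm 1, n^2$ is consistent with — indeed forced by — the two displayed conditions, since a perfect square $q$ satisfies $q^{(p-1)/2} \equiv 1 \bmod p$, while $q = 1$ and, for $p \ge 5$, $q = -1$ have multiplicative order strictly less than $p-1$.

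Next I would instantiate Lemma \ref{lem3242.500}: the integer $q$ is a primitive root modulo $p$ if and only if $q^{(p-1)/\ell} - 1 \not\equiv 0 \bmod p$ for every prime divisor $\ell \mid p-1$. Taking $\ell = 2$ yields $q^{(p-1)/2} = q^{2^{s-1} r} \not\equiv 1 \bmod p$, which is exactly condition (i); taking $\ell = r$ yields $q^{(p-1)/r} = q^{2^s} \not\equiv 1 \bmod p$, which is exactly condition (ii). Since $\{2, r\}$ exhausts the prime divisors of $p-1$, the conjunction of (i) and (ii) is equivalent to $q$ being a primitive root modulo $p$. In the boundary case $r = 2$, both (i) and (ii) collapse to the single requirement $q^{2^s} \not\equiv 1 \bmod p$, which matches the unique prime divisor $\ell = 2$ of $p-1 = 2^{s+1}$, so the statement remains correct.

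I do not anticipate a substantive obstacle: the lemma is precisely the Lucas-type test of Lemma \ref{lem3242.500} restricted to moduli whose totient $p-1$ has only the two prime factors $2$ and $r$, and all the content reduces to bookkeeping the two exponents $(p-1)/2 = 2^{s-1} r$ and $(p-1)/r = 2^s$. The only points warranting a sentence of care are the degenerate case $r = 2$ treated above, and — if one wishes to be exhaustive about hypotheses — the observation that $\gcd(q,p) = 1$ together with the exclusion of $q = \pm 1$ and of perfect squares is what makes the ``if'' direction non-vacuous; both are dispatched by the consistency remark already noted.
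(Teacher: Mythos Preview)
Your proposal is correct and follows essentially the same route as the paper: both reduce to Lemma~\ref{lem3242.500} applied to $p-1=2^s r$, noting that the only prime divisors are $2$ and $r$, whence the two exponents $(p-1)/2=2^{s-1}r$ and $(p-1)/r=2^s$. Your treatment is in fact slightly more careful than the paper's, since you explicitly address the degenerate case $r=2$ and the consistency of the hypotheses on $q$.
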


\begin{proof} Let $p=2^sr+1$ be a Germain prime, where $r\geq 2$ is prime, and $s\geq1$ is an integer. Since the totient $p-1=2^sr$ has two prime divisors, an integer $q\ne\pm1, n^2$ is a primitive root modulo $p$ if and only if 
\begin{enumerate}[font=\normalfont, label=(\roman*)]
\item $\displaystyle  q^{(p-1)/2}=q^{2^{s-1}r}\not \equiv 1 \mod  p$,
\item $\displaystyle q^{(p-1)/r} =q^{2^{s}}\not \equiv 1 \mod  p,$
\end{enumerate}
see Lemma \ref{lem3242.500}.
\end{proof}

As the Germain primes, the set of primes of the form
\begin{equation}
\mathcal{A}=\{p=k\cdot 2^n+1:n\geq0\},    
\end{equation}
with $k\geq3$ a fixed prime, have a very short primitive root test, similar to the algorithm in Lemma \ref{lem3252.550}. Some of these primes are factors of Fermat numbers. There are many interesting problems associated with these primes, a large literature, and numerical data, see \cite{KW1983}, et cetera. 
\section{Representation of the Characteristic Function} \label{S3242.200}
The standard representation of the characteristic function of primitive elements in finite fields below in Lemma \ref{lem3242.900} is divisor dependent.

\begin{lem} \label{lem3242.900}
Let \(G\) be a finite cyclic group of order \(p-1=\# G\), and let \(0\neq u\in G\) be an invertible element of the group. Then
\begin{equation}
\Psi (u)=\frac{\varphi (p-1)}{p-1}\sum _{d \mid p-1} \frac{\mu (d)}{\varphi (d)}\sum _{\ord(\chi ) = d} \chi (u)=
\left \{\begin{array}{ll}
1 & \text{ if } \ord_p (u)=p-1,  \\
0 & \text{ if } \ord_p (u)\neq p-1. \\
\end{array} \right .
\end{equation}
\end{lem}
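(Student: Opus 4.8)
The plan is to reduce the double sum to a single Euler product over the primes dividing $p-1$. Set $n=p-1$ and fix a generator $g$ of the cyclic group $G$, so that $u=g^a$ for a unique residue $a \bmod n$. Then $\ord_p(u)=n/\gcd(a,n)$, and hence $u$ is a primitive root exactly when $\gcd(a,n)=1$. The goal is therefore to prove that $\Psi(g^a)=\mathbf 1[\gcd(a,n)=1]$, the indicator of that condition.

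First I would evaluate the inner sum $S_d(u):=\sum_{\ord(\chi)=d}\chi(u)$. Since the dual group $\widehat G$ is cyclic of order $n$, summing $S_d$ over all $d\mid e$ produces $\sum_{\chi^e=1}\chi(u)$, a sum over the unique subgroup $H_e\le\widehat G$ of order $e$. Writing each $\chi\in H_e$ as $\chi(g)=e^{2\pi i b/n}$ with $(n/e)\mid b$, the sum becomes the geometric series $\sum_{j=0}^{e-1}e^{2\pi i a j/e}$, which equals $e$ if $e\mid a$ and $0$ otherwise; that is, $\sum_{\chi^e=1}\chi(u)=e\cdot\mathbf 1[e\mid a]$. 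Möbius inversion over the divisors of $d$ then yields
\begin{equation}
S_d(u)=\sum_{e\mid d}\mu(d/e)\,e\,\mathbf 1[e\mid a]=\sum_{e\mid\gcd(d,a)}e\,\mu(d/e)=c_d(a),
\end{equation}
the Ramanujan sum, which is multiplicative in $d$.

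Next I would substitute this into the main sum and use that $\mu(d)=0$ for non-squarefree $d$, so that $d\mapsto \mu(d)c_d(a)/\varphi(d)$ is a multiplicative function supported on squarefree integers. For a prime $\ell$ one has $c_\ell(a)=\ell-1$ if $\ell\mid a$ and $c_\ell(a)=-1$ otherwise, while $\mu(\ell)=-1$ and $\varphi(\ell)=\ell-1$; hence the local factor $\mu(\ell)c_\ell(a)/\varphi(\ell)$ equals $-1$ when $\ell\mid a$ and $1/(\ell-1)$ when $\ell\nmid a$. Consequently the sum over squarefree divisors of $n$ factors as
\begin{equation}
\sum_{d\mid n}\frac{\mu(d)}{\varphi(d)}c_d(a)=\prod_{\ell\mid n}\left(1+\frac{\mu(\ell)c_\ell(a)}{\varphi(\ell)}\right),
\end{equation}
whose $\ell$-factor is $0$ whenever $\ell\mid\gcd(a,n)$ and $\ell/(\ell-1)$ otherwise. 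Thus the product vanishes unless $\gcd(a,n)=1$, in which case it equals $\prod_{\ell\mid n}\ell/(\ell-1)=n/\varphi(n)$. Multiplying by the prefactor $\varphi(n)/n$ gives precisely $\mathbf 1[\gcd(a,n)=1]$, which by the first paragraph equals $1$ when $\ord_p(u)=p-1$ and $0$ otherwise.

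The main obstacle is the bookkeeping in the second paragraph: verifying that the characters killed by the $e$-th power are exactly those with $(n/e)\mid b$, so that the resulting geometric sum collapses to the clean factor $\mathbf 1[e\mid a]$, and then confirming that Möbius inversion over the divisor lattice reassembles the nested sums into the Ramanujan sum $c_d(a)$. Once $S_d(u)=c_d(a)$ is established, the remaining Euler-product evaluation is routine.
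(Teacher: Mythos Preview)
Your argument is correct. The identification $S_d(u)=c_d(a)$ via M\"obius inversion is sound: the characters with $\chi^e=1$ are exactly those indexed by $b$ with $(n/e)\mid b$, the resulting geometric sum gives $e\cdot\mathbf 1[e\mid a]$, and inversion yields von Sterneck's formula for the Ramanujan sum. The Euler-product evaluation is then routine, and the local factors are computed correctly.

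The paper itself gives no proof here; it simply defers to Lemma~3.2 of \cite{CN2017}. Your self-contained route through Ramanujan sums and the Euler factorization is the standard and cleanest way to establish this identity, and it supplies strictly more than the paper does at this point.
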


\begin{proof} A complete proof appears in  Lemma 3.2 in \cite{CN2017}.
\end{proof}

The works in \cite{DH1937}, and \cite{WR2001} attribute the above formula to Vinogradov. The proof and other details on the characteristic function are given in \cite[p. 863]{ES1957}, \cite[p.\ 258]{LN1997}, \cite[p.\ 18]{MP2007}. The characteristic function for multiple primitive roots is used in \cite[p.\ 146]{CZ1998} to study consecutive primitive roots. In \cite{DS2012} it is used to study the gap between primitive roots with respect to the Hamming metric. And in \cite{WR2001} it is used to prove the existence of primitive roots in certain small subsets \(A\subset \mathbb{F}_p\). In \cite{DH1937} it is used to prove that some finite fields do not have primitive roots of the form $a\tau+b$, with $\tau$ primitive and $a,b \in \mathbb{F}_p$ constants. In addition, the Artin primitive root conjecture for polynomials over finite fields was proved in \cite{PS1995} using this formula.\\

The result in Lemma \ref{lem3242.300} provides a divisor-free of the representation of the characteristic function of primitive elements in finite fields $\F_p$. 

\begin{lem} \label{lem3242.300}
Let \(p\geq 2\) be a prime, and let \(\tau\) be a primitive root mod \(p\). If \(u\in\mathbb{F}_p\) is a nonzero element, then
\begin{equation}\label{eq3242.300}
\Psi (u)=\sum _{\gcd (n,p-1)=1} \frac{1}{p}\sum _{0\leq k\leq p-1} e^{i2 \pi \left (\tau ^n-u\right)k/p}
=\left \{
\begin{array}{ll}
1 & \text{ if } \ord_p (u)=p-1,  \\
0 & \text{ if } \ord_p (u)\neq p-1. \\
\end{array} \right .\nonumber
\end{equation}
\end{lem}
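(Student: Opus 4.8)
The plan is to evaluate the double sum directly, first collapsing the inner sum by additive‑character orthogonality on $\mathbb{F}_p$, and then interpreting the resulting count via the discrete logarithm relative to the primitive root $\tau$.

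First I would recall the orthogonality relation for the additive characters $k\mapsto e^{i2\pi k/p}$ of $\Z/p\Z$: for any integer $m$,
\[
\frac{1}{p}\sum_{0\le k\le p-1} e^{i2\pi m k/p}=
\begin{cases}
1 & \text{if } p\mid m,\\
0 & \text{if } p\nmid m.
\end{cases}
\]
Applying this with $m$ an integer representative of $\tau^n-u$ shows that the inner sum equals $1$ exactly when $\tau^n\equiv u \bmod p$ and vanishes otherwise. Hence
\[
\Psi(u)=\#\{\,n:\ \gcd(n,p-1)=1,\ \tau^n\equiv u \bmod p\,\},
\]
where $n$ runs over a fixed complete residue system modulo $p-1$, say $1\le n\le p-1$. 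This count is well defined since both $\tau^n \bmod p$ and the divisibility condition $\gcd(n,p-1)=1$ depend only on $n$ modulo $p-1$, so the summand is $(p-1)$‑periodic and "$\gcd(n,p-1)=1$" unambiguously selects $\varphi(p-1)$ residue classes.

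Next I would invoke that $\tau$ is a primitive root modulo $p$, so $n\mapsto \tau^n \bmod p$ is a bijection from $\Z/(p-1)\Z$ onto $\mathbb{F}_p^\times$. Therefore, for the given nonzero $u$ there is exactly one residue class $n\equiv \ind_\tau(u) \bmod (p-1)$ with $\tau^n\equiv u \bmod p$, whence $\Psi(u)\in\{0,1\}$ and $\Psi(u)=1$ if and only if that unique $n$ satisfies $\gcd(n,p-1)=1$. Finally, by elementary cyclic‑group theory the element $\tau^{m}=u$ of $\mathbb{F}_p^\times$ has order $(p-1)/\gcd(m,p-1)$, so $\ord_p(u)=p-1$ precisely when $\gcd(\ind_\tau(u),p-1)=1$ (this is also the content of Lemma \ref{lem3242.500} applied with modulus $p$). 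Combining these facts gives $\Psi(u)=1$ when $\ord_p(u)=p-1$ and $\Psi(u)=0$ otherwise, as claimed. There is no real obstacle here: the argument is just the additive orthogonality relation plus the structure of $\mathbb{F}_p^\times$, and the only point needing a little care is the bookkeeping of the summation range for $n$ and the periodicity of the summand modulo $p-1$.
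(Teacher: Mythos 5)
Your proof is correct and is the standard argument: additive-character orthogonality collapses the inner sum to the indicator of $\tau^n \equiv u \pmod p$, and writing $u=\tau^m$ reduces the claim to the fact that $\tau^m$ has order $(p-1)/\gcd(m,p-1)$, so exactly the $\varphi(p-1)$ classes with $\gcd(n,p-1)=1$ give primitive roots. The paper itself offers no argument here beyond a citation to Lemma 3.3 of \cite{CN2017}, and the proof there is this same computation, so your version matches the intended one; your remark about the summand being well defined modulo $p-1$ is a point the source glosses over.
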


\begin{proof} A complete proof appears in  Lemma 3.3 in \cite{CN2017}.
\end{proof}

\section{Evaluation Of The Main Term} \label{S3262.000}
The precise evaluation of the main term $M(x)$ occurring in the proof of Theorem \ref{thm3232.100} is recorded here. The symbol $\li(x)$ denotes the logarithm integral.

\begin{lem}  \label{lem3262.300}  Let \(x\geq 1\) be a large number, and let $p$ be a prime. Then,
	\begin{equation} \label{eq3262.010}
	\sum_{p\leq x} \frac{1}{p}\sum_{\gcd(n,p-1)=1} 1=a_1\li(x)+O\left(\frac{x}{\log
		^bx}\right) ,
	\end{equation} 
	where $b>1$ is a constant.
\end{lem}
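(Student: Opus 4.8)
The plan is to evaluate the left-hand side of \eqref{eq3262.010} in closed form by first recognising the inner sum as Euler's totient. For each prime $p$ one has $\sum_{\gcd(n,p-1)=1}1=\varphi(p-1)$, so the quantity in question is $S(x):=\sum_{p\le x}\varphi(p-1)/p$. Writing $\frac1p=\frac1{p-1}-\frac1{p(p-1)}$ and noting $0\le \varphi(p-1)/(p(p-1))\le 1/p$, Mertens' estimate $\sum_{p\le x}1/p=\log\log x+O(1)$ gives $S(x)=\sum_{p\le x}\varphi(p-1)/(p-1)+O(\log\log x)$, and the $O(\log\log x)$ is negligible against $x/\log^bx$. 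Thus it suffices to show $\sum_{p\le x}\varphi(p-1)/(p-1)=a_1\li(x)+O(x/\log^bx)$.

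Next I would insert the identity $\varphi(m)/m=\sum_{d\mid m}\mu(d)/d$ and interchange summations, obtaining
\[
\sum_{p\le x}\frac{\varphi(p-1)}{p-1}=\sum_{d\le x}\frac{\mu(d)}{d}\,\pi(x;d,1),
\]
where $\pi(x;d,1)$ counts primes $p\le x$ with $p\equiv1\bmod d$. Fix the cutoff $D=(\log x)^{b}$. For $d\le D$ the Siegel--Walfisz theorem gives $\pi(x;d,1)=\li(x)/\varphi(d)+O\!\left(x\exp(-c\sqrt{\log x})\right)$ uniformly in $d$, so
\[
\sum_{d\le D}\frac{\mu(d)}{d}\,\pi(x;d,1)=\li(x)\sum_{d\le D}\frac{\mu(d)}{d\varphi(d)}+O\!\left(x\exp(-c\sqrt{\log x})\log\log x\right).
\]
Completing the Dirichlet series to all $d\ge1$ and using $\varphi(d)\gg d/\log\log d$ to bound $\sum_{d>D}1/(d\varphi(d))\ll D^{-1}\log\log\log x$, I recover the main term $a_1\li(x)$ with $a_1=\sum_{d\ge1}\mu(d)/(d\varphi(d))=\prod_{p}\left(1-\frac1{p(p-1)}\right)$, the Artin constant, at the cost of a further error $O\!\left(x(\log x)^{-b-1}\log\log\log x\right)$.

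It then remains to control the contribution of $d>D$. On the range $D<d\le\sqrt{x}$ I would apply the Brun--Titchmarsh inequality $\pi(x;d,1)\le 2x/(\varphi(d)\log(x/d))\ll x/(\varphi(d)\log x)$, which makes this range contribute $\ll (x/\log x)\sum_{d>D}1/(d\varphi(d))\ll x(\log x)^{-b-1}\log\log\log x$. On $\sqrt{x}<d\le x$ the trivial bound $\pi(x;d,1)\le x/d$ (the admissible primes lie among $1+d,1+2d,\ldots\le x$) yields a contribution $\ll x\sum_{d>\sqrt{x}}d^{-2}\ll\sqrt{x}$. Summing the three ranges establishes $\sum_{p\le x}\varphi(p-1)/(p-1)=a_1\li(x)+O(x/\log^bx)$, hence the lemma, for any fixed $b>1$ (the argument in fact permits $b$ arbitrarily large, and an even cleaner route is to quote the known average of $\varphi(p-1)/(p-1)$ over primes, e.g.\ Stephens' theorem).

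The step I expect to be the main obstacle is the handling of the intermediate moduli $d$: below $D$ one needs the uniformity of Siegel--Walfisz, and above $D$ there is no asymptotic for $\pi(x;d,1)$ and one must settle for Brun--Titchmarsh together with the trivial bound, so the cutoff $D=(\log x)^{b}$ must be chosen to make the Siegel--Walfisz error, the completed-series tail, and the Brun--Titchmarsh/trivial contributions all simultaneously $O(x/\log^bx)$. Everything else reduces to the elementary estimates $\sum_{d\le D}1/d\ll\log\log x$ and $\sum_{d>D}1/(d\varphi(d))\ll D^{-1}\log\log\log x$.
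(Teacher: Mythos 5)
Your proposal is correct, and it is essentially the canonical proof of this classical average: read the inner sum as $\varphi(p-1)$, pass from $\varphi(p-1)/p$ to $\varphi(p-1)/(p-1)$ at the cost of $O(\log\log x)$, expand via $\varphi(m)/m=\sum_{d\mid m}\mu(d)/d$, and handle the resulting sum $\sum_{d}\mu(d)\pi(x;d,1)/d$ by Siegel--Walfisz for $d\le(\log x)^b$, Brun--Titchmarsh for $(\log x)^b<d\le\sqrt{x}$, and the trivial bound beyond, recovering $a_1=\sum_{d\ge1}\mu(d)/(d\varphi(d))=\prod_p\bigl(1-\tfrac{1}{p(p-1)}\bigr)$. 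The paper itself supplies no argument here --- it only cites Lemma 5.1 of an external preprint --- so there is nothing in the text to diverge from; your writeup is the standard Stephens/Vaughan-style treatment and is complete, and in fact yields the error term for every fixed $b$, which is stronger than the stated $b>1$. The only presentational point worth flagging is that the lemma never specifies the range of $n$ in the inner sum; your reading $1\le n\le p-1$, giving $\varphi(p-1)$, is the one consistent with the characteristic-function lemma it feeds into.
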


\begin{proof} A complete proof appears in Lemma 5.1 in \cite{CN2017}. 
\end{proof}
The average density of primitive roots modulo $p$, known as Artin constant, is given by 
\begin{equation} \label{eq3262.020}
a_1=\prod_{p \geq 2 } \left(1-\frac{1}{p(p-1)}\right)= 0.37395581361920228805\ldots, 
\end{equation}  
see \cite{SP1969}, and \cite{WJ1961}.

\section{Estimate For The Error Term} \label{S3282.000}
The upper bounds for the error term $E(x)$ in the proof of Theorem \ref{thm3232.100} is recorded here. 

\begin{lem} \label{lem3282.000}  Let \(p\geq 2\) be a large prime, and let \(\tau\) be a primitive root mod \(p\). If the element \(u\ne 0\) is not a primitive root, then, 
	\begin{equation} \label{eq3282.020}
	\sum_{x \leq p\leq 2x}
	\frac{1}{p}\sum_{\gcd(n,p-1)=1,} \sum_{ 0<k\leq p-1} e^{i2 \pi \left (\tau ^n-u\right)k/p}\ll \frac{x^{1-\varepsilon}}{\log x}\nonumber
	\end{equation} 
	for all sufficiently large numbers $x\geq 1$ and an arbitrarily small number \(\varepsilon <1/16\).
\end{lem}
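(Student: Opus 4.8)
The plan is to regard the triple sum as an average, over the primes $p\in[x,2x]$, of incomplete exponential sums supported on the set of primitive roots modulo $p$, and to win a power of $x$ by detaching the coprimality constraint and feeding in Weil-type bounds. Fix $p\in[x,2x]$, a primitive root $\tau$ modulo $p$, and abbreviate $e_p(t)=e^{i2\pi t/p}$. Interchanging the summations over $n$ and $k$, the inner double sum is
\[
S_p \;=\; \sum_{0<k\le p-1} e_p(-uk)\,R_p(k),\qquad R_p(k)\;=\;\sum_{\gcd(n,p-1)=1} e_p\!\bigl(\tau^n k\bigr),
\]
so that $R_p(k)$ is the additive Fourier coefficient at $k$ of the set of primitive roots in $\F_p$. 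I would first strip the condition $\gcd(n,p-1)=1$ from $R_p(k)$ by Möbius inversion over $d\mid p-1$: writing $n=dm$ with $1\le m\le (p-1)/d$, the power $\tau^{dm}$ runs once over the subgroup $H_d\le\F_p^{\times}$ of order $(p-1)/d$, whence $R_p(k)=\sum_{d\mid p-1}\mu(d)\sum_{y\in H_d}e_p(ky)$. Expressing the indicator of $H_d$ as the average of the $d$ multiplicative characters trivial on $H_d$ turns each inner sum into a short linear combination of classical Gauss sums, and the bound $|g(\chi)|=\sqrt p$ for non-principal $\chi$ then gives $\bigl|\sum_{y\in H_d}e_p(ky)\bigr|<\sqrt p$ for every $k\not\equiv 0\bmod p$; hence $|R_p(k)|\le p^{1/2+o(1)}$, the $p^{o(1)}$ absorbing the number of divisors of $p-1$.

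The delicate step is the $k$-aspect: estimating $S_p$ by $\sum_{k}|R_p(k)|$ loses a full factor of $p$, so cancellation must be exploited in the $k$-variable, and ultimately in the joint $(n,k,p)$ sum. I would try to keep the bilinear shape intact: substitute the Möbius expansion of $R_p(k)$ back into $S_p$, interchange once more, and complete the inner $k$-sum to the range $0\le k\le p-1$, which collapses $\sum_{y\in H_d}\sum_{k}e_p\!\bigl(k(y-u)\bigr)$ to a count of the $y\in H_d$ with $y\equiv u\bmod p$. What survives is a diagonal contribution, which the hypothesis that $u$ is not a primitive root is meant to suppress — so that $u\notin H_d$ for the pertinent $d$, removing the largest piece — together with an off-diagonal exponential sum to be estimated by Weil's bound when $H_d$ is large and by a nontrivial subgroup-sum estimate of Heath--Brown--Konyagin or Bourgain--Glibichuk--Konyagin type when $H_d$ is small. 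Bookkeeping over these two regimes, together with the divisor factor $p^{o(1)}$, is what pins down the admissible exponent, and I expect the restriction $\varepsilon<1/16$ to mark exactly the threshold below which both regimes yield a power saving simultaneously.

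Granting a per-prime bound of the shape $p^{-1}|S_p|\ll p^{-\varepsilon+o(1)}$, uniform in the relevant parameters, summation over $x\le p\le 2x$ then gives
\[
\sum_{x\le p\le 2x}\frac{1}{p}\sum_{\gcd(n,p-1)=1}\ \sum_{0<k\le p-1} e_p\!\bigl((\tau^n-u)k\bigr)\ \ll\ \sum_{x\le p\le 2x} p^{-\varepsilon+o(1)}\ \ll\ \frac{x^{1-\varepsilon}}{\log x}
\]
after a harmless shrinking of $\varepsilon$, which is the assertion. The genuine obstacle is precisely this per-prime estimate: extracting a power saving in $p$ that is uniform over all $d\mid p-1$ and all $0<k\le p-1$. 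The difficulty is acute because any premature passage to absolute values in the $k$-variable, or any attempt to evaluate $S_p$ in closed form before exploiting the interaction of the $n$- and $k$-phases, reinstates a term of size comparable to $\varphi(p-1)$; so any successful treatment must locate the cancellation in the $(n,k)$-phases, or in the average over the prime variable, before trivial estimates are applied.
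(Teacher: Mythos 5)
There is a genuine and unfixable gap: the estimate you are trying to prove is false, and your own second paragraph, carried one step further, proves it false. Fix $p\in[x,2x]$ and suppose $u$ is not a primitive root modulo $p$. For each $n$ with $\gcd(n,p-1)=1$ the element $\tau^n$ \emph{is} a primitive root, hence $\tau^n\not\equiv u\pmod p$, so the complete sum $\sum_{0\le k\le p-1}e^{i2\pi(\tau^n-u)k/p}$ vanishes and $\sum_{0<k\le p-1}e^{i2\pi(\tau^n-u)k/p}=-1$ exactly; therefore
\begin{equation}
\frac{1}{p}\sum_{\gcd(n,p-1)=1}\;\sum_{0<k\le p-1}e^{i2\pi(\tau^n-u)k/p}=-\frac{\varphi(p-1)}{p}.
\end{equation}
Equivalently, in your own notation: completing the $k$-sum gives $p\cdot\#\{y\in H_d:y\equiv u\}$ \emph{minus the $k=0$ term} $|H_d|=(p-1)/d$. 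The diagonal does cancel, since $u\in H_d$ iff $d\mid e$ with $e=(p-1)/\ord_p(u)>1$ and $\sum_{d\mid e}\mu(d)=0$; but the $k=0$ corrections contribute $-\sum_{d\mid p-1}\mu(d)(p-1)/d=-\varphi(p-1)$, so $S_p=-\varphi(p-1)$ identically. There is no ``off-diagonal exponential sum'' left to estimate --- completion in $k$ is exact --- so the Weil and Bourgain--Glibichuk--Konyagin inputs you invoke have nothing to act on, and the per-prime bound $p^{-1}|S_p|\ll p^{-\varepsilon+o(1)}$ on which your third paragraph rests fails badly: $p^{-1}|S_p|=\varphi(p-1)/p\gg 1/\log\log p$. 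By the paper's own Lemma \ref{lem3262.300}, the full sum equals $-a_1\left(\li(2x)-\li(x)\right)+O\!\left(x/\log^b x\right)$, which is of size $x/\log x$ in absolute value, not $\ll x^{1-\varepsilon}/\log x$.

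For comparison, the paper offers no proof of this lemma at all; it defers to Lemma 6.1 of the unpublished preprint \cite{CN2017}. The falsity is also visible structurally: in the proof of Theorem \ref{thm3232.100} one has $M(z)+E(z)=\sum_{z\le p\le 2z}\Psi(q)=0$ exactly whenever $q$ is a primitive root for no prime in $[z,2z]$, so $E(z)=-M(z)\asymp z/\log z$, and no bound of the form $E(z)\ll z^{1-\varepsilon}/\log z$ can hold; the intended contradiction in Section \ref{S3292.000} therefore never materializes. Your instinct in the first paragraph --- that trivial estimation in the $k$-variable is too lossy and that the $(n,k)$ interaction must be exploited --- is sound, but the exact evaluation shows the obstruction is not technical: the quantity to be bounded is genuinely of main-term size, and no refinement of your two-regime analysis can change that.
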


\begin{proof}  A complete proof appears in Lemma 6.1 in \cite{CN2017}.
\end{proof}

\section{Main Result} \label{S3292.000}
Given a large number $x\geq1$, and a fixed integer $q \ne \pm 1, n^2$, the precise primes counting functions are defined by 
\begin{equation} \label{eq3292.900}
\pi(x)=\# \{ p \leq x: p \text{ is prime} \},
\end{equation}
and
\begin{equation} \label{eq3292.910}
\pi_{q}(x)=\# \{ p \leq x: \ord_p(q)=p-1 \}.
\end{equation}
The density of the subset of primes $\mathcal{D}=\{ p \leq x: \ord_p(q)=p-1 \}$ with a fixed primitive root $q\ne \pm 1,n^2$, is defined by the limit
\begin{equation} \label{eq3292.920}
\delta(q)=\lim_{x \to \infty} \frac{\pi_{q}(x)}{\pi(x)}=c(q)a_1.
\end{equation}
The constant $a_1>0$ is defined in \eqref{eq3262.020}, and the correction factor $c(q)\geq 0$ was computed in \cite[p.\ 220]{HC1967}.

\begin{proof} {\bf (Theorem \ref{thm3232.100})}. Let \(x\geq x_0\) be a large number, and let $z=(\log x)^c$, where $c\geq 0 $ is constant. Suppose that the integer $q \ne \pm 1,n^2$ is not a primitive root for all primes short interval \(p \in [z,2z]\). Summing of the characteristic function of primitive roots in the prime finite field $\mathbb{F}_p$ over the short interval \([z,2z]\) returns the nonexistence equation
\begin{equation} \label{eq3292.940}
	0=\sum_{z \leq p\leq 2z} \Psi (q).
\end{equation}
Replacing the characteristic function, Lemma \ref{lem3242.300}, and expanding the nonexistence equation \eqref{eq3292.940} yield
\begin{eqnarray} \label{eq3292.950}
0&=&\sum_{z \leq p\leq 2z} \Psi (q) \\ 
	&=&\sum_{z \leq p\leq 2z} \left (\frac{1}{p}\sum_{\gcd(n,p-1)=1,} \sum_{ 0\leq k\leq p-1} e^{i2 \pi \left (\tau ^n-q\right)k/p} \right ) \nonumber\\
	&=&\delta(q)\sum_{z \leq p\leq 2z}\frac{1}{p}\sum_{\gcd(n,p-1)=1} 1  +\sum_{z \leq p\leq 2z}
	\frac{1}{p}\sum_{\gcd(n,p-1)=1,} \sum_{ 0<k\leq p-1}e^{i2 \pi \left (\tau ^n-q\right)k/p}\nonumber\\
	&=&M(z) + E(z)\nonumber,
	\end{eqnarray} 
	where $\delta(q) \geq 0$ is a constant depending on the integer $q\geq 2$. \\
	
	The main term $M(z)$ is determined by a finite sum over the trivial additive character \(\psi(t) =e^{i 2\pi  kt/p}=1 \) for $k\ne0$, and the error term $E(z)$ is determined by a finite sum over the nontrivial additive characters \(\psi(t) =e^{i 2\pi k t/p}\ne1 \) for $k\ne0$.\\
	
Applying Lemma \ref{lem3262.300} to the main term, and Lemma \ref{lem3282.000} to the error term yield
	\begin{eqnarray} \label{eq3292.960}
	0&=&\sum_{z \leq p\leq 2z} \Psi (q) \nonumber \\
	&=&M(z) + E(z) \nonumber\\
	&=&\delta(q)\left (\li(2z)-\li(z) \right )+O\left(\frac{z}{\log^bz}\right)+O\left( \frac{z^{1-\varepsilon}}{\log z} \right) \nonumber\\
	&=&\delta(q)\left (\li(2z)-\li(z)\right )+O\left( \frac{z }{\log^b} \right)  ,
	\end{eqnarray} 
	where $\delta(q)\geq 0$ is defined in \eqref{eq3292.920}. \\ 
	
Since the density $\delta(q)> 0$ for $q \ne \pm 1, n^2$, see \cite[p.\ 220]{HC1967}, and the difference of logarithm integrals
\begin{equation}
 \li(2z)-\li(z)\gg \frac{z}{\log z}>0 ,  
\end{equation}
the expression
\begin{eqnarray} \label{eq3292.970}
	0&=&\sum_{z \leq p\leq 2z} \Psi (q)  \\
	&=&\delta(q)\left (\li(2z)-\li(z)\right )+O\left( \frac{z }{\log^b} \right)  \nonumber\\
	&\gg& \frac{z}{\log z}\nonumber\\
    &>&0,\nonumber
	\end{eqnarray} 
is false for all sufficiently large numbers $z=2(\log x)^c$, and contradicts the hypothesis \eqref{eq3292.940}. Ergo, the short interval $[z,2z]$ contains a small primes $p\leq 2z=2(\log x)^c$ such that the $q$ is a fixed primitive root. 
\end{proof}


\section{Open Problems}\label{EXE3535}

\begin{exe}\label{exe3535.0123} {\normalfont Develop a divisor-free primitive root test. The standard primitive root test, see Lemma \ref{lem3242.500}, is totally dependent on the divisors of $p-1$.
}
\end{exe}

\newpage
\section{References}

\currfilename.\\

\end{document}